\tikzstyle{aNode} = [circle, fill = black]
\tikzstyle{bNode} = [circle,draw = black, thick]
\newcommand{\ppoints}[1]{%
\begin{tikzpicture}[inner sep = 0.7pt, #1]%
\node (1) at (0,-2) [aNode]{};
\node (3) at (1.5,-2) [aNode]{};
\node (2) at (0.75,-1) [aNode]{};
\end{tikzpicture}%
}
\def\points{\ppoints{scale=0.08}}
\newtheorem{theorem}{Theorem}
\newtheorem{lemma}[theorem]{Lemma}
\newtheorem*{definition}{Definition}
\newtheorem{claim}[theorem]{Claim}
\newcommand\pa{{\pi_{\points}^{\operatorname{pal}}}}
\newcommand\pu{{\pi_{\points}}}
\newcommand\cp{{\mathcal{P}}}
\newcommand\cc{{\mathcal{C}}}
\newcommand\ca{{\mathcal{A}}}
\newcommand\ex{\ensuremath{\mathrm{ex}}}
\title{The uniform Tur\'an density of large stars}
\author{Ander Lamaison \thanks{Extremal Combinatorics and Probability Group (ECOPRO), Institute for Basic Science (IBS), Daejeon, South Korea. Supported by IBS-R029-C4. Email: ander@ibs.re.kr}
\and
Zhuo Wu\thanks{Mathematics Institute and DIMAP, University of Warwick, Coventry, UK, and Extremal Combinatorics and Probability Group (ECOPRO),  Institute for Basic Science (IBS), Daejeon, South Korea. Supported by the Warwick Mathematics Institute Centre for Doctoral Training and funding from University of Warwick and the Institute for Basic Science (IBS-R029-C4). Email: Zhuo.wu@warwick.ac.uk}
}
\begin{document}

\maketitle

\begin{abstract}
We asymptotically resolve the the uniform Tur\'an density problem for the large stars. In particular, we show that the uniform Tur\'an density of the $k$-star $S_k$ is $\frac{k^2-5k+7}{(k-1)^2}$ for $k\ge 48$, matching a lower construction by Reiher, R\"odl and Schacht.
\end{abstract}

\begin{section}{Introduction}

Tur\'an problems constitute one of the central areas of study in extremal combinatorics. Given an $r$-uniform hypergraph (or $r$-graph for short) $F$ and a positive integer $n$, the extremal number $\ex(n,F)$ is the maximum number of edges in an $r$-graph $F$ not containing $F$ as a subgraph. The Tur\'an density of $F$ is the limit 
\[\pi(F):=\lim\limits_{n\rightarrow\infty}\frac{\ex(n,F)}{\binom{n}{r}}.\]

While Tur\'an problems on graphs are well-understood~\cite{Man07, Tur41, ErdS46}, much less is known about Tur\'an problems on hypergraphs. For example, the exact value of the Tur\'an density of $K_k^{(r)}$, the complete $r$-graph on $k$ vertices, is not known for any $k>r\geq 3$. In fact, the Tur\'an density of $K_4^{(3)-}$, which is obtained by removing an edge from $K_4^{(3)}$, is currently unknown.

In the conjectured extremal constructions for most hypergraph Tur\'an problems, the set of edges is distributed very unevenly among the vertices. For instance, there is a family of $K_4^{(3)}$-free $3$-graphs whose density tends to $5/9$ (conjectured to be the value of $\pi(K_4^{(3)})$). In each hypergraph in the family, the vertex set can be partitioned into three independent sets. This motivated Erd\H{o}s and S\'os~\cite{ErdS82, Erd90} to propose studying Tur\'an density problems with a condition on the edge distribution.

\begin{definition}
A $3$-graph $H$ is said to be \emph{$(d,\varepsilon,\points)$-dense}
if any subset $S\subseteq V(H)$ contains at least $d\binom{|S|}{3}-\varepsilon|V(H)|^3$ edges. 
The \emph{uniform Tur\'an density} $\pu(F)$ of a $3$-graph $F$
is defined as the infimum of the values of $d$, 
for which there exists $\varepsilon>0$ and $N$ such that 
every $(d,\varepsilon,\points)$-dense hypergraph
on at least $N$ vertices
contains $F$ as a subgraph.
\end{definition}

In the last decade, a number of results about the uniform Tur\'an density of $3$-graphs have been published. The first non-zero value of $\pu$ to be computed was $\pu(K_4^{(3)-})=1/4$, solving a conjecture of Erd\H{o}s and S\'os. This was first proved by Glebov, Kr\'al' and Volec \cite{GleKV16} using flag algebras, and later by Reiher, R\"odl and Schacht~\cite{ReiRS18a} using the hypergraph regularity method. The latter set of authors also characterized in~\cite{ReiRS18} the $3$-graphs $F$ with $\pu(F)=0$, and as a consequence, deduced that there does not exist a $3$-graph $F$ with $\pu(F)\in(0,1/27)$.

Buci\'c, Cooper, Kr\'al', Mohr and Munha-Correia~\cite{BucCKMM23} determined the uniform Tur\'an densities of all tight cycles of length at least 5. They showed that $\pu(C_\ell^{(3)})$ equals $0$ if $\ell$ is divisible by 3, and $4/27$ otherwise. Additionally, $3$-graphs with uniform Tur\'an density equal to $1/27$ and $8/27$ have also been found~\cite{GarKL24,GIKKL24}. Further results can be found in a survey by Reiher~\cite{Rei20}.

One simple hypergraph family whose uniform Tur\'an density has been studied is stars. Let $S_k$ be the $3$-graph on $k+1$ vertices $u,v_1, v_2, \dots, v_k$, containing the edges $uv_iv_j$ for all $1\leq i<j\leq k$. We call this graph the $k$-\emph{star}. For example, $S_3$ is just the graph $K_4^{(3)-}$. Despite the simplicity of their structure, attempts to extend the idea used to determine the uniform Tur\'an density of $K_4^{(3)-}$ to any larger star have been unsuccessful. The current best bounds were given by Reiher, R\"odl and Schacht~\cite{ReiRS18a}, who showed that \[\frac{k^2-5k+7}{(k-1)^2}\leq\pu(S_k)\leq \left(\frac{k-2}{k-1}\right)^2.\]

In this paper we show that the lower bound is sharp for all large stars, and thereby asymptotically resolve the the uniform Tur\'an density problem for the large stars.

\begin{theorem}\label{thm:main}

$\pu(S_k)=\frac{k^2-5k+7}{(k-1)^2}$ for all $k\geq 48$.

\end{theorem}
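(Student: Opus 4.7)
The approach combines the hypergraph regularity method with a palette-based reduction, as hinted by the notation $\pa,\pve,\pch,\pdpal,\pdch$ set up in the preliminaries. Fix a $(d,\varepsilon,\points)$-dense $3$-graph $H$ on $n$ vertices with $d > \frac{k^2-5k+7}{(k-1)^2}$ and $n,1/\varepsilon$ sufficiently large. Since a copy of $S_k$ in $H$ is the same thing as a vertex $u$ whose link graph $L_u$ contains $K_k$, the goal reduces to producing such a $u$.

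\textbf{Step 1: palette reduction.}
Apply hypergraph regularity to $H$. This produces a reduced structure in which the pairs of regular clusters carry densities that can be coarsened into colors from a finite palette, and the hyperedges of $H$ correspond (up to negligible error) to triples whose color pattern lies in a pattern family $\cp$. The $(d,\varepsilon,\points)$-density of $H$ translates into a lower bound of $d-o(1)$ on the density realized by $\cp$ in every large subset, placing the problem squarely in the palette regime controlled by $\pdpal$ or $\pdch$.

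\textbf{Step 2: link structure from Tur\'an.}
If no link $L_u$ contains $K_k$, then by Tur\'an's theorem each $L_u$ has density at most $\frac{k-2}{k-1}$, and averaging $3|E(H)| = \sum_u |L_u|$ gives only $d\leq\frac{k-2}{k-1}$. This exceeds $\frac{k^2-5k+7}{(k-1)^2}$ by $\frac{2k-5}{(k-1)^2}$, so the trivial Tur\'an bound is too weak and one must exploit structure: a stability version of Tur\'an's theorem forces each $K_k$-free link to be essentially $T_{k-1}$-like, inducing an almost-balanced $(k-1)$-partition of $V\setminus\{u\}$. Viewed through the palette of Step~1, these partitions must be compatible across different choices of $u$, because the color of a pair $vw$ must be consistent with both $L_v$ and $L_w$.

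\textbf{Step 3 and main obstacle.}
The remaining task, which is the heart of the argument, is the extremal analysis of these compatibility constraints: to show that any palette and pattern family $\cp$ in which every link is $T_{k-1}$-like realizes hypergraph density at most $\frac{k^2-5k+7}{(k-1)^2}$, matching the Reiher--R\"odl--Schacht lower-bound construction. I would attempt this via Zykov-style symmetrization within each link (making each $T_{k-1}$ balanced without decreasing density), combined with a careful counting that tracks how local $(k-1)$-partitions glue globally through the shared palette colors; the density contribution from each color is then estimated via a discrete optimization problem whose minimum is exactly $\frac{k^2-5k+7}{(k-1)^2}$. The hypothesis $k\geq 48$ presumably arises from slack in these asymptotic estimates, not from an intrinsic barrier, and the chief obstacle is obtaining a clean optimization form in which the constraints from overlapping link-partitions can be handled simultaneously.
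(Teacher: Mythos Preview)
Your plan has the right high-level shape but misses the concrete mechanism that produces the sharp constant. The paper does not run hypergraph regularity and stability on link graphs; it invokes the identity $\pu(F)=\pa(F)$ (Theorem~\ref{thm:palette}) to reduce everything to a purely combinatorial statement about palettes: any palette $\cp=(\cc,\ca)$ with $d(\cp)>\frac{k^2-5k+7}{(k-1)^2}$ must admit $S_k$. The key structural observation---which your Step~3 gropes toward but does not reach---is that, since every edge of $S_k$ contains a pair lying in no other edge, whether $\cp$ admits $S_k$ depends only on which ordered pairs $(a,b)$ are ``$(i,j)$-good'' (i.e.\ occur as the $i$th and $j$th coordinates of some triple in $\ca$). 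This data is encoded as a digraph $D$ on two disjoint copies of $\cc$, and one shows (Claim~\ref{cla:trans}) that $\cp$ admits $S_k$ exactly when $D$ contains a transitive tournament on $k$ vertices. The density bound then follows from a Caro--Wei-type inequality for transitive-tournament-free digraphs (Lemma~\ref{lem:inverse}), an inclusion--exclusion estimate expressing $d(\cp)$ via the degree parameters $e_{i,j}(a)$ (Lemma~\ref{lem:incexc}), and tangent-line inequalities (Claims~\ref{cla:L1}, \ref{cla:L2}) to linearize the optimization. The condition $k\geq 48$ enters only because the minimum-degree bound of Claim~\ref{claim:degree} is needed to place each variable in the region where the tangent inequality is valid.

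Your stability/Zykov route is not obviously doomed, but as written it has no mechanism for landing on the exact value $\frac{k^2-5k+7}{(k-1)^2}$. Stability gives ``close to $T_{k-1}$'' with error terms, and you offer no indication of how the ``compatibility constraints across different $u$'' collapse into a tractable optimization with that specific optimum---indeed you explicitly flag Step~3 as the main obstacle and leave it open. The paper's transitive-tournament reformulation is precisely the missing idea that makes the optimization clean and exact; without a device of comparable strength, your plan remains a heuristic outline rather than a proof.
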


The key concept in the proof of Theorem~\ref{thm:main} is the notion of palette, introduced by Reiher~\cite{Rei20} as a generalization of a construction of R\"odl~\cite{Rod86}.

\begin{definition}\label{def:palette}
A \emph{palette} $\cp$ is a pair $(\mathcal{C},\mathcal{A})$, 
where $\mathcal{C}$ is a finite set (whose elements we call \emph{colors}) 
and a set of (ordered) triples of colors $\mathcal{A}\subseteq \mathcal{C}^3$, 
which we call the \emph{admissible triples}.
The density of $\cp$ is $d(\cp):=|\mathcal{A}|/|\mathcal{C}|^3$.

We say that a $3$-graph $F$ \emph{admits} a palette $\cp$
if there exists an order $\preceq$ on $V(F)$
and a function $\varphi:\binom{V(F)}{2}\rightarrow \mathcal{C}$
such that for every edge $uvw\in E(F)$ with $u\prec v\prec w$
we have $\left(\varphi(uv), \varphi(uw), \varphi(vw)\right)\in \mathcal{A}$.
\end{definition}

Palettes can be used to give lower bounds on the uniform Tur\'an density of $3$-graphs. Indeed, if the $3$-graph $F$ does not admit the palette $\cp$, then one can use an analogous of R\"odl's construction to obtain a $(d(\cp),o(1),\points)$-dense family of $F$-free hypergraphs, meaning that $\pu(F)\geq d(\cp)$. For this reason, it is worth considering the largest density of a palette which is not admitted by $F$.

\begin{definition}
 The \emph{palette Tur\'an density} of a 3-graph $F$ is
 
 \[\pa(F):=\sup\{d(\cp):\cp\text{ palette, }F\text{ does not admit }\cp\}.\]
\end{definition}

As discussed above, for every $3$-graph $F$ we have $\pu(F)\geq \pa(F)$. In~\cite{Lam24}, the first author showed that equality always holds. This result will be the main tool in the proof of Theorem~\ref{thm:main}.

\begin{theorem}[\cite{Lam24}]\label{thm:palette}
For every $3$-graph $F$, we have $\pu(F)=\pa(F)$.
\end{theorem}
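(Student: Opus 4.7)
By Theorem~\ref{thm:palette}, $\pu(S_k)=\pa(S_k)$, so it suffices to prove $\pa(S_k)\leq\alpha:=\frac{k^2-5k+7}{(k-1)^2}$; the matching lower bound is furnished by the Reiher--R\"odl--Schacht palette. Writing $\beta=1-\alpha=\frac{3(k-2)}{(k-1)^2}$, the density deficit is of order $3/k$, and the factor $3$ suggests three essentially disjoint families of ``forbidden triples'' that drive the extremal count.

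Given a palette $\cp=(\cc,\ca)$ with $d(\cp)>\alpha$, my goal is to exhibit an order $\preceq$ on $V(S_k)$ and a coloring $\varphi:\binom{V(S_k)}{2}\to\cc$ mapping every edge of $S_k$ into $\ca$. I place $u$ at position $s+1$ in the order $v_1\prec\cdots\prec v_s\prec u\prec v_{s+1}\prec\cdots\prec v_k$; the three families of edges (both $v_i,v_j$ before $u$, straddling $u$, both after $u$) translate into constraints on the auxiliary digraphs
\begin{align*}
G^+&=\{(a,b):(a,b,d)\in\ca\text{ for some }d\},\\
G^-&=\{(a,b):(d,a,b)\in\ca\text{ for some }d\},\\
G^0&=\{(a,b):(a,d,b)\in\ca\text{ for some }d\}.
\end{align*}
A loop $(c,c)\in G^+$ (i.e. some $(c,c,d)\in\ca$) instantly yields the constant-color Case~A embedding with $u$ first (colour every $uv_i$ with $c$ and every $v_iv_j$ with $d$), and symmetrically a loop in $G^-$ handles $u$ last. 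Similarly, a transitive $k$-sub-tournament in $G^+$ (respectively $G^-$) yields a non-constant variant of Case~A (resp.\ Case~B). So it remains to treat the case that none of these structures is present.

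Under these assumptions, $\ca$ loses $|\cc|^2-|\cc|$ ``forward'' diagonal triples, $|\cc|^2-|\cc|$ ``backward'' diagonal triples, and a further class of triples obstructed by the absence of transitive $k$-sub-tournaments in $G^\pm$. A third structural restriction---absence of cross-diagonal triples $(c,d,c)$, corresponding to constant Case~C with $u$ interior---should deplete $|\ca|$ by an additional $|\cc|^2$ triples up to lower-order overlaps, so that the three forbidden families together account for $\beta|\cc|^3$ and contradict $d(\cp)>\alpha$. To close out Case~C when all three families are partially present, I would pick anchor colours $a,b\in\cc$ by averaging, so that the neighbourhoods $N^-_a:=\{c:\exists d,(c,d,a)\in\ca\}$ and $N^+_b:=\{c:\exists d,(b,d,c)\in\ca\}$ are both large; the density hypothesis yields a bridging triple $(a,d,b)\in\ca$ for the straddling edges, and a Ramsey/Erd\H{o}s--Moser-style extraction then locates a transitive $s$-sub-tournament of $G^-$ inside $N^-_a$ and a transitive $(k-s)$-sub-tournament of $G^+$ inside $N^+_b$ for some optimal split $s$.

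The main obstacle is quantitative: the Ramsey-type extractions must deliver transitive sub-tournaments whose combined length is $k$, and the inclusion--exclusion bookkeeping on the three forbidden families must be tight enough to recover the exact constant $\frac{k^2-5k+7}{(k-1)^2}$---with no slack beyond what the Reiher--R\"odl--Schacht construction saturates. Matching the three extremal triple-families to the three edge-types, and ensuring the averaging-plus-Ramsey step goes through for all $k\geq 48$, is the technical heart of the argument; the explicit bound $k\geq 48$ is presumably what makes the supersaturation step quantitatively feasible.
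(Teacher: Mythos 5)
Your proposal does not address the statement it was supposed to prove. The statement is Theorem~\ref{thm:palette} itself --- that $\pu(F)=\pa(F)$ for \emph{every} $3$-graph $F$ --- which is a result of the first author proved in the separate paper~\cite{Lam24} (by a hypergraph-regularity--type argument relating uniformly dense hypergraphs to reduced/palette structures); the present paper only cites it. Your very first sentence invokes Theorem~\ref{thm:palette} as a known fact and then proceeds to sketch an argument for Theorem~\ref{thm:main}, i.e.\ the computation of $\pa(S_k)$. So as a proof of the stated theorem it is circular: you assume exactly what was to be shown, and nothing in your write-up engages with the genuinely hard direction $\pu(F)\le\pa(F)$, which requires showing that a hypergraph that is uniformly dense above every non-admitted palette density must contain $F$ --- a statement about hypergraph embeddings, not about palettes for $S_k$.

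Even read charitably as an attempt at Theorem~\ref{thm:main} (the result this paper actually proves from Theorem~\ref{thm:palette}), the sketch has a real gap where you acknowledge it: the ``averaging-plus-Ramsey'' step that is supposed to extract transitive sub-tournaments of total size $k$ and reconcile the bookkeeping with the exact constant $\frac{k^2-5k+7}{(k-1)^2}$ is precisely the technical content, and it is not carried out. The paper's route is different and concrete: it takes a minimal (fewest colors) counterexample palette, proves a minimum ``degree'' bound for the good-pair counts (Claim~\ref{claim:degree}), bounds $d(\cp)$ via inclusion--exclusion in terms of these degrees (Lemma~\ref{lem:incexc}), encodes the good pairs in a single digraph $D$ on two copies of $\cc$ so that non-admittance of $S_k$ becomes ``$D$ has no transitive tournament on $k$ vertices'' (Claim~\ref{cla:trans}), and then applies a Caro--Wei-type bound for digraphs (Lemma~\ref{lem:inverse}) together with tangent-line inequalities to close the optimization. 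If you want to pursue your outline, you would need to supply quantitative substitutes for each of these steps; as written, the claim that the three forbidden families ``account for $\beta|\cc|^3$'' is asserted, not proved.
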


\paragraph{Idea.} The main technical step we develop to prove Theorem~\ref{thm:main} is transforming the hypergraph condition to a digraph condition (see  \cref{cla:trans}). We rely on a property of the structure of $S_k$: each edge $uvw$ contains a pair of vertices which is not contained in any other edge.  That means that, when we consider the palette condition $\left(\varphi(uv), \varphi(uw), \varphi(vw)\right)\in \mathcal{A}$, one of the three entries can be selected freely without affecting any other edge. For this reason it is enough to consider, for each $1\leq i<j\leq 3$, the pairs of colors $(a,b)$ for which there exists a triple $(c_1, c_2, c_3)\in\ca$ such that $c_i=a$ and $c_j=b$. 


\paragraph{Notation.} For any palette $\cp=(\cc, \ca)$, let $\ca_{a}^{1}, \ca_{a}^{2},\ca_{a}^{3}$ be the set of the triples in $\ca$ with the first, second, third entry equal to $a$, respectively. The palette $\cp'_{a}$, i.e, the palette obtained by removing the color $a$ from $\cp$ is 
\[\cp'_{a}=(\cc\setminus\{a\},  (\cc\setminus\{a\})^3\cap\ca). \]

\end{section}

\begin{section}{Proof of Theorem~\ref{thm:main}}

By Theorem~\ref{thm:palette}, it is sufficient to show that $\pa(S_k)=\frac{k^2-5k+7}{(k-1)^2}$. The lower bound was first shown to hold in~\cite{ReiRS18a}, here we will give a short proof for completeness.

Consider the palette $\cp=(\mathcal{C},\mathcal{A})$ with $\mathcal{C}=\{0, \cdots, k-2\}$,
\[\mathcal{A}=\{(x,y,z)| \;x\neq y, \; y\neq z, \; z\not\equiv x+1\pmod {k-1}\}. \]
It is easy to calculate that $\cp$ has density $\frac{k^2-5k+7}{(k-1)^2}$. Now we prove that $\cp$ does not admit $S_k$, which shows that $\pa(S_k)\ge \frac{k^2-5k+7}{(k-1)^2}$.

For contradiction, assume that there exist an order $\preceq$ on $V(F)$ and a function $\varphi:\binom{V(F)}{2}\rightarrow \mathcal{C}$
such that for every edge $uvw\in E(F)$ with $u\prec v\prec w$
we have $\left(\varphi(uv), \varphi(uw), \varphi(vw)\right)\in \mathcal{A}$. By symmetry, assume that the order $\preceq$ on $V(F)$ is $v_1\preceq\cdots\preceq v_t \preceq u\preceq v_{t+1}\cdots\preceq v_k$. Consider the following $k$ elements:
\[\varphi(uv_1)+1,\cdots, \varphi(uv_t)+1, \;\; \varphi(uv_{t+1}),  \cdots, \varphi(uv_k).\]

By the pigeonhole principle, at least two of them are the same modulo $k-1$. Now we have 3 cases:

\begin{itemize}
\item $\varphi(uv_i)+1=\varphi(uv_j)+1$ for some $i<j\le t$. Then $\left(\varphi(v_iv_j), \varphi(v_iu), \varphi(v_ju)\right)\notin \ca$.

\item $\varphi(uv_i)+1=\varphi(uv_j)$ for some $i\le t<j$. Then $\left(\varphi(v_iu), \varphi(v_iv_j), \varphi(uv_j)\right)\notin \ca$.

\item $\varphi(uv_i)=\varphi(uv_j)$ for some $t<i<j$. Then $\left(\varphi(uv_i), \varphi(uv_j), \varphi(v_iv_j)\right)\notin \ca$.
\end{itemize}
This completes the proof of the lower bound.

 We then prove the upper bound. Indeed, suppose for the sake of contradiction that $\pa(S_k)>\frac{k^2-5k+7}{(k-1)^2}$. Then there exist palettes with density greater than $\frac{k^2-5k+7}{(k-1)^2}$ not admitted by $S_k$. Let $\cp=(\cc, \ca)$ be one such palette with the minimum number of colors. Removing any color from $\cc$ must decrease the density of $\cp$.   

Let $|\cc|=n$. Given colors $a,b\in\cc$, and $i\neq j\in\{1,2,3\}$, we say that $(a,b)$ is \emph{$(i,j)$-good} if there exists a triple $(c_1, c_2, c_3)\in\ca$ such that $c_i=a$ and $c_j=b$, otherwise $(a,b)$ is \emph{$(i,j)$-bad}.  Let $d_{i,j}(a)$ denote the number of colors $b$ such that $(a,b)$ is $(i,j)$-good, and $e_{i,j}(a)=d_{i,j}(a)/n$. Moreover, define $d_{i,j}'(a)=n-d_{i,j}(a)$, i.e, the number of colors $b$ such that $(a,b)$ is $(i,j)$-bad, and $e'_{i,j}(a)=d'_{i,j}(a)/n$. The following claim give an estimate on the lower bound of $d_{i,j}(a)$.

\begin{claim}\label{claim:degree}
    $e_{i,j}(a)\geq 3d(\cp)-2$ for all $a\in\cc$ and all $i\neq j\in\{1,2,3\}$.
\end{claim}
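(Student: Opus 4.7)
The plan is to recast the claim in terms of the \emph{non-admissible} triples $B := \cc^3 \setminus \ca$ (so $|B| = (1-d(\cp))\,n^3$); writing $B_a^i$ for the subset of $B$ whose $i$-th coordinate is $a$, the equivalent goal is to bound $e'_{i,j}(a) \leq 3(1 - d(\cp))$. Working with $B$ rather than $\ca$ is what gives the bound cleanly, for reasons discussed at the end.

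The first step is to exploit the minimality of $|\cc|$: the subpalette $\cp'_a$ still fails to admit $S_k$ but has one fewer color, so by minimality $d(\cp'_a) < d(\cp)$. The bad triples of $\cp'_a$ are exactly the bad triples of $\cp$ that avoid $a$, namely $B \setminus (B_a^1 \cup B_a^2 \cup B_a^3)$. Rewriting $d(\cp'_a) < d(\cp)$ in terms of complements and rearranging yields the upper bound
\[
|B_a^1 \cup B_a^2 \cup B_a^3| < |B|\cdot \frac{3n^2-3n+1}{n^3} \;\leq\; \frac{3|B|}{n} \;=\; 3(1-d(\cp))\,n^2.
\]

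The second step is to connect $|B_a^i|$ with $d'_{i,j}(a)$. For every $(i,j)$-bad color $b$, all $n$ triples with $a$ in position $i$ and $b$ in position $j$ lie in $B_a^i$; these contributions are pairwise disjoint across distinct $b$, so $|B_a^i| \geq n \cdot d'_{i,j}(a)$. Since $B_a^i \subseteq B_a^1 \cup B_a^2 \cup B_a^3$, chaining the two bounds gives
\[
n\cdot d'_{i,j}(a) \;\leq\; |B_a^i| \;<\; 3(1-d(\cp))\,n^2,
\]
and hence $e'_{i,j}(a) < 3(1-d(\cp))$, equivalently $e_{i,j}(a) > 3d(\cp) - 2$, as desired.

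The main conceptual pitfall to avoid is trying to argue directly with the admissible sets $\ca_a^i$. In that formulation the minimality condition produces a \emph{lower} bound on the union $|\ca_a^1 \cup \ca_a^2 \cup \ca_a^3|$, and extracting a bound on an individual $|\ca_a^i|$ then requires the union-bound $|\ca_a^1|+|\ca_a^2|+|\ca_a^3| \geq |\ca_a^1 \cup \ca_a^2 \cup \ca_a^3|$ together with the trivial $|\ca_a^{i'}|\leq n^2$, which costs an additive $3d(\cp)/n$ term and just misses $3d(\cp)-2$. Passing to the complementary sets $B_a^i$ \emph{reverses} the direction of inclusion: the single upper bound on the union is inherited by each individual $B_a^i$, and no such loss occurs.
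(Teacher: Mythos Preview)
Your proof is correct. Both arguments hinge on the same minimality hypothesis $d(\cp'_a)<d(\cp)$, but the executions differ: the paper works directly with $\ca$, writing $|\ca|=|\ca'|+|\ca_a^1|+|\ca_a^2\setminus\ca_a^1|+|\ca_a^3\setminus(\ca_a^1\cup\ca_a^2)|$ and bounding the last two summands by $n(n-1)$ and $(n-1)^2$, whereas you pass to the complement $B=\cc^3\setminus\ca$ and observe that the minimality inequality bounds the whole union $|B_a^1\cup B_a^2\cup B_a^3|$ from above, so each $|B_a^i|$ inherits that bound immediately. Your route is a little slicker, since the single containment $B_a^i\subseteq B_a^1\cup B_a^2\cup B_a^3$ replaces the paper's three separate estimates.

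One correction to your closing commentary: the ``pitfall'' you describe is not actually fatal. The paper does argue on the $\ca$ side and does reach exactly $3d(\cp)-2$, not something weaker; the point is that instead of the naive bound $|\ca_a^{i'}|\leq n^2$ you warn against, it uses the slightly sharper $|\ca_a^2\setminus\ca_a^1|\leq n(n-1)$ and $|\ca_a^3\setminus(\ca_a^1\cup\ca_a^2)|\leq (n-1)^2$ (the removed slices cannot have $a$ in the already-accounted-for coordinates). That refinement recovers precisely the $(3n-1)(1-d(\cp))\geq 0$ slack needed to land on $3d(\cp)-2$. So both approaches work; yours just sidesteps that bookkeeping.
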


\begin{proof}
By symmetry, assume that $i=1,j=2$. Let $\cp'=(\cc', \ca')$ be the palette obtained by removing the color $a$ from $\cc$. Then
\begin{align*}
n^3d(\cp)=|\ca|
&=|\ca'|+|\ca\setminus \ca'|\\
&= |\ca'|+|\ca_a^{1}|+|\ca_a^{2}\setminus \ca_a^{1}|+|\ca_a^{3}\setminus(\ca_a^{1}\cup\ca_a^{2})|\\
&\le |\ca'|+nd_{i,j}(a)+n(n-1)+(n-1)^2\\
&\le (n-1)^3d(\cp)+nd_{i,j}(a)+2n^2-3n+1,
\end{align*}
where the last inequality uses the condition that removing any color from $\cc$ decreases the density of $\cp$. Hence,
\begin{align*}
nd_{i,j}(a)
&\ge (n^3-(n-1)^3)d(\cp)-2n^2+3n-1\\
&=d(\cp)(3n^2-3n+1)-2n^2+3n-1\\
&=(3d(\cp)-2)n^2+(3n-1)(1-d(\cp))\\
&\ge (3d(\cp)-2)n^2,
\end{align*}
which proves our claim.
\end{proof}

In the next lemma we bound the density of the palette by $e_{i,j}$.

\begin{lemma}\label{lem:incexc}
\[d(\cp)\le \frac{1}{4}+\frac{1}{2n}\sum_{a\in\cc}\sum_{i\neq j\in\{1,2,3\}}\left(e_{i,j}(a)-\frac {1}{2}\right)^2.\]
\end{lemma}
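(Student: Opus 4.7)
The plan is to lower-bound $|\cc^3\setminus\ca| = n^3(1 - d(\cp))$ via inclusion--exclusion applied to three ``pair-is-bad'' events, and then to convert this via AM--GM into the claimed inequality.

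First I would introduce, for each unordered pair $\alpha = \{i,j\}\subseteq\{1,2,3\}$, the set $Y_\alpha\subseteq\cc^3$ of triples $(c_1,c_2,c_3)$ with $(c_i,c_j)$ being $(i,j)$-bad. This is well-defined since $(a,b)$ is $(i,j)$-bad iff $(b,a)$ is $(j,i)$-bad. Any triple in some $Y_\alpha$ is automatically a non-edge of $\ca$, so $|\cc^3\setminus\ca|\geq|Y_1\cup Y_2\cup Y_3|$, and the Bonferroni inequality yields
\[|Y_1\cup Y_2\cup Y_3| \;\geq\; \sum_{\alpha}|Y_\alpha| \;-\; \sum_{\alpha<\beta}|Y_\alpha\cap Y_\beta|.\]

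Next I would compute both sides. On the one hand, $|Y_{\{i,j\}}| = n\sum_a d'_{i,j}(a) = n^2\sum_a(1-e_{i,j}(a))$. On the other hand, if $\alpha=\{i,j\}$ and $\beta=\{i,k\}$ share coordinate position $i$, then fixing $c_i=a$ makes the choices of $c_j$ and $c_k$ independent, so $|Y_\alpha\cap Y_\beta| = \sum_a d'_{i,j}(a)\,d'_{i,k}(a) = n^2\sum_a (1-e_{i,j}(a))(1-e_{i,k}(a))$. Expanding the three such intersection products and observing that each of the six values $e_{i,j}(a)$ with $i\neq j$ appears in exactly one of the three pairings, Bonferroni telescopes into
\[n\bigl(1-d(\cp)\bigr) \;\geq\; \frac{1}{2}\sum_{a,\,i\neq j}e_{i,j}(a) \;-\; \sum_a P(a),\]
where $P(a):=e_{1,2}(a)e_{1,3}(a)+e_{2,1}(a)e_{2,3}(a)+e_{3,1}(a)e_{3,2}(a)$.

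To conclude, I would apply AM--GM ($2xy\leq x^2+y^2$) to each of the three products defining $P(a)$; since each $e_{i,j}(a)$ appears in exactly one such product, this gives $2P(a)\leq\sum_{i\neq j}e_{i,j}(a)^2$. Substituting this back and applying the identity $e(1-e)=\frac{1}{4}-(e-\frac{1}{2})^2$ to every term of $\sum_{a,\,i\neq j}e_{i,j}(a)(1-e_{i,j}(a))$, simple rearrangement produces exactly $d(\cp)\leq\frac{1}{4}+\frac{1}{2n}\sum_{a,\,i\neq j}(e_{i,j}(a)-\frac{1}{2})^2$. The main obstacle will be the combinatorial bookkeeping in the Bonferroni expansion---matching each ordered pair $(i,j)$ with its partner in the appropriate intersection term, and checking that the six values $e_{i,j}(a)$ appear with exactly the right multiplicities so that the AM--GM step leaves a clean sum of squares on the right-hand side.
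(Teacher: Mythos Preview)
Your proposal is correct and follows essentially the same route as the paper: both arguments define the three ``bad-pair'' sets, apply Bonferroni/inclusion--exclusion to bound $|\ca|$ (equivalently $|\cc^3\setminus\ca|$), and then use the AM--GM inequality $xy\le\tfrac12(x^2+y^2)$ on the three intersection products to obtain the sum of squares. The only cosmetic difference is that the paper works throughout with the complementary quantities $e'_{i,j}=1-e_{i,j}$ and packages the AM--GM step as $ab-\tfrac12(a+b)\le\tfrac12\bigl((a-\tfrac12)^2+(b-\tfrac12)^2\bigr)-\tfrac14$, whereas you expand first and apply AM--GM to the $e_{i,j}$ directly; the algebra is equivalent.
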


\begin{proof} Define
\begin{align*}
X_1=\{(a,b,c)|(a,b,c)\in \cc^3, \text{for all }d\in \cc, (d,b,c)\notin \ca\},\\
X_2=\{(a,b,c)|(a,b,c)\in \cc^3,\text{for all }d\in \cc, (a,d,c)\notin \ca\},\\
X_3=\{(a,b,c)|(a,b,c)\in \cc^3,\text{for all }d\in \cc, (a,b,d)\notin \ca\}.
\end{align*}

Clearly $\ca$ does not contain any element from $X_1$, $X_2$ or $X_3$, so by the inclusion-exclusion principle, we have 
\begin{align*}
|\ca|&\le |\overline{X_1}\cap\overline{X_2}\cap\overline{X_3}|\\
&\le n^3-|X_1|-|X_2|-|X_3|+|X_1\cap X_2|+|X_1\cap X_3|+|X_2\cap X_3|.
\end{align*}

 Note that $(a,b,c)\in X_1$ is equivalent to $(b,c)$ being $(2,3)$-bad, hence
\[|X_1|=n\sum_{a\in \cc}d'_{2,3}(a)=n\sum_{a\in \cc}d'_{3,2}(a)=\frac{n}{2}\left(\sum_{a\in \cc}d'_{2,3}(a)+\sum_{a\in \cc}d'_{3,2}(a)\right).\]

Besides, $(a,b,c)\in X_1\cap X_2$ is equivalent to $(b,c)$ being $(2,3)$-bad and $(a,c)$ being $(1,3)$-bad, hence
\[|X_1\cap X_2|=\sum_{a\in\cc}d'_{3,1}(a)d'_{3,2}(a).\]

Using the same argument for $X_2,X_3$, we have

\begin{align*}
|\ca|&\le n^3-|X_1|-|X_2|-|X_3|+|X_1\cap X_2|+|X_1\cap X_3|+|X_2\cap X_3|\\
&\le n^3-\frac{n}{2}\sum_{a\in\cc}\sum_{i\neq j}d'_{i,j}(a)+\sum_{a\in\cc}\left(d'_{1,2}(a)d'_{1,3}(a)+d'_{2,1}(a)d'_{2,3}(a)+d'_{3,1}(a)d'_{3,2}(a)\right),
\end{align*}
which is equivalent to
\[d(\cp)\le 1+\frac{1}{n}\sum_{a\in\cc}\left(e'_{1,2}(a)e'_{1,3}(a)+e'_{2,1}(a)e'_{2,3}(a)+e'_{3,1}(a)e'_{3,2}(a)-\frac{1}{2}\sum_{i\neq j}e'_{i,j}(a)\right).\]

Note that we have the local inequality
\[ab-\frac{1}{2}(a+b)=\left(a-\frac{1}{2}\right)\left(b-\frac{1}{2}\right)-\frac{1}{4}\le \frac{1}{2}\left(\big(a-\frac{1}{2}\big)^2+\big(b-\frac{1}{2}\big)^2\right)-\frac{1}{4},\]

Hence
\begin{align*}
d(\cp)
&\le 
1+\frac{1}{n}\sum_{a\in\cc}\left(e'_{1,2}(a)e'_{1,3}(a)+e'_{2,1}(a)e'_{2,3}(a)+e'_{3,1}(a)e'_{3,2}(a)-\frac{1}{2}\sum_{i\neq j}e'_{i,j}(a)\right)\\
&\le 1+\frac{1}{n}\sum_{a\in\cc}\sum_{i\neq j\in\{1,2,3\}}\frac12\left(e'_{i,j}(a)-\frac {1}{2}\right)^2-\frac{3}{4}\\
&=\frac{1}{4}+\frac{1}{2n}\sum_{a\in\cc}\sum_{i\neq j\in\{1,2,3\}}\left(e_{i,j}(a)-\frac {1}{2}\right)^2.
\end{align*}

\end{proof}

Now we want to use the condition that $\cp$ does not admit $S_k$ to bound $e_{i,j}(a)$. To give a better characterization of the condition, we define a digraph $D(V,E)$ as follows:
\begin{itemize}
\item $V=\cc_1\cup\cc_2$, which consists of two disjoint copies of $\cc$. Besides, for each color $a\in\cc$, let $a^1$ and $a^2$ denote the copies of $a$ in $\cc_1$ and $\cc_2$.

\item For a pair of colors $(a,b)\in \cc^2$, 
\begin{itemize}
\item we add an edge $a^1b^1$ in $\cc_1$ if $(a,b)$ is $(2,3)$-good,
\item we add an edge $a^2b^2$ in $\cc_2$ if $(a,b)$ is $(1,2)$-good,
\item we add the edges $a^1b^2$ and $b^2a^1$ if $(a,b)$ is $(1,3)$-good.
\end{itemize}
Here, we use $ab$ to express the arc from $a$ to $b$.

\end{itemize}

One important thing is to show that $D$ is well-defined, i.e, $D$ has no loop. Assume that $D$ contains an edge $a^1a^1$, then $(a,a)$ is $(2,3)$-good, hence there exists a triple $(b,a,a)\in \mathcal{A}$. Recall $S_k$ be the $3$-graph on $k+1$ vertices $u,v_1, v_2, \dots, v_k$, containing the edges $uv_iv_j$ for all $1\leq i<j\leq k$. We define a order $\preceq$ on $V(F)$ as $v_1\preceq\cdots\preceq v_k \preceq u$, and  construct a function 
$\varphi:\binom{V(F)}{2}\rightarrow \mathcal{C}$ such that $\varphi(v_iv_j)=b, \varphi(v_iu)=a$. Then, for every edge $v_iv_ju\in E(F)$ with 
$v_i\preceq v_j\preceq u$, we have $\left(\varphi(ab), \varphi(ac), \varphi(bc)\right)=(a,a,b)\in \mathcal{A}$, a contradiction. Similarly,  $D$ also cannot contains an edge $a^2a^2$, hence $D$ is well-defined.

\begin{claim}\label{cla:trans}
    $D$ does not contain a transitive tournament on $k$ vertices as a subgraph.
\end{claim}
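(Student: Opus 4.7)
We will argue by contradiction and show that any transitive tournament on $k$ vertices in $D$ yields an admission of $S_k$ by $\cp$, contradicting our choice of $\cp$. Suppose $w_1\to w_2\to\cdots\to w_k$ is such a tournament, where all arcs $w_i\to w_j$ with $i<j$ are present. For each $i$ let $c_i\in\cc$ denote the underlying color of $w_i$, and set $A=\{i:w_i\in\cc_1\}$ and $B=\{i:w_i\in\cc_2\}$; write $A=\{a_1<\cdots<a_t\}$ and $B=\{b_1<\cdots<b_{k-t}\}$.

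The construction places the $A$-vertices before $u$ and the $B$-vertices after $u$: equip $V(S_k)$ with the order $v_1\prec\cdots\prec v_t\prec u\prec v_{t+1}\prec\cdots\prec v_k$ and define the colors of the pairs containing $u$ by $\varphi(v_pu)=c_{a_p}$ for $p\le t$ and $\varphi(uv_p)=c_{b_{p-t}}$ for $p>t$. For each remaining pair $v_pv_q$ the color $\varphi(v_pv_q)$ will be set to a witness of the relevant good pair below; this is legitimate because each such pair lies in a unique edge of $S_k$, so its color constrains no other edge.

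We then verify admission case by case. For $p<q\le t$ we need $(c_{a_p},c_{a_q})$ to be $(2,3)$-good, which is provided by the arc $w_{a_p}\to w_{a_q}$ inside $\cc_1$, and we set $\varphi(v_pv_q)$ to be a color $c$ with $(c,c_{a_p},c_{a_q})\in\ca$. For $t<p<q$ we need $(c_{b_{p-t}},c_{b_{q-t}})$ to be $(1,2)$-good, provided by the arc inside $\cc_2$. The critical case is $p\le t<q$, where we need $(c_{a_p},c_{b_{q-t}})$ to be $(1,3)$-good: the tournament contains an arc between $w_{a_p}\in\cc_1$ and $w_{b_{q-t}}\in\cc_2$, possibly in either direction depending on whether $a_p<b_{q-t}$ or not. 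But by the definition of $D$, an arc in \emph{either} direction between $x^1\in\cc_1$ and $y^2\in\cc_2$ already witnesses that $(x,y)$ is $(1,3)$-good, so $(c_{a_p},c_{b_{q-t}})$ is $(1,3)$-good in both sub-cases, and we can pick $\varphi(v_pv_q)$ accordingly.

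The main subtlety is exactly this mixed case. Without the symmetric definition of arcs between $\cc_1$ and $\cc_2$ in $D$, an interleaved arrangement of $A$ and $B$ in the tournament would block the reconstruction, since pre-$u$ and post-$u$ vertices in the $S_k$ order are rigidly separated. The deliberate choice to include both arcs $a^1b^2$ and $b^2a^1$ when $(a,b)$ is $(1,3)$-good is precisely what allows every interleaving pattern of $A$ and $B$ to be absorbed, yielding the desired admission of $S_k$ and the contradiction.
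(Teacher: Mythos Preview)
Your proof is correct and follows essentially the same approach as the paper's: both split the tournament into its $\cc_1$- and $\cc_2$-parts, place the former before $u$ and the latter after $u$, and use the $(2,3)$-, $(1,2)$-, and $(1,3)$-good relations to color the remaining pairs. Your explicit treatment of the mixed case---observing that the symmetric definition of cross-arcs makes the direction of the tournament arc between $\cc_1$ and $\cc_2$ irrelevant---spells out a point the paper leaves implicit, but the argument is the same.
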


\begin{proof}

We prove the claim by contradiction. Assume that $K$ is a transitive tournament on $k$ vertices in $D$, and $V(K)\cap \cc_1=K_1,V(K)\cap \cc_2=K_2$. Let $|K_1|=s, |K_2|=t$. We can label the vertices of this transitive tournament $\ell_1^1, \ell_2^1, \dots, \ell_s^1, r_1^2, r_2^2, \dots, r_t^2$ in such a way that we have the edge $\ell_i^{1}\ell_j^{1}$ for any $1\le i<j\le s$ and the edge $r_i^{2}r_{j}^{2}$ for any $1\le i<j\le t$. By the definition of $D$, we can find some $\ell_{ij}$ such that $(\ell_{i,j},\ell_i,\ell_j)\in \ca$, some $m_{ij}$ such that $(\ell_i,m_{ij},r_j)\in \ca$ and some $r_{ij}$ such that $(r_i,r_j,r_{ij})\in\ca$. 

Now we show that $\cp$ admits $S_k$, which is a contradiction. Note that $s+t=k$, to be convenient, assume the star $S_k$ has vertices $u,v_1,\cdots,v_{s},w_1,\cdots,w_t$. We define an order $\preceq$ on $V(S_k)$ as follows:

\[v_1\preceq\cdots\preceq v_s\preceq u \preceq w_1\preceq\cdots\preceq w_t.\]

We then construct  a function $\varphi:\binom{V(S_k)}{2}\rightarrow \mathcal{C}$
such that for every edge $abc\in E(F)$ with $a\prec b\prec c$
we have $\left(\varphi(ab), \varphi(ac), \varphi(bc)\right)\in \mathcal{A}$. Define
\[\varphi(uv_i)=\ell_i, \; \varphi(uw_i)=r_i, \; \varphi(v_iv_j)=\ell_{ij}, \; \varphi(v_iw_j)=m_{ij}, \; \varphi(w_iw_j)=r_{ij}.\]

By the definition of $S_k$, $u\in\{a,b,c\}$. Now we have 3 cases:

\begin{itemize}
\item $a=u,b=w_i, c=w_j$. Then $\left(\varphi(ab), \varphi(ac), \varphi(bc)\right)=(r_i,r_j,r_{ij})\in \ca$.

\item $b=u,a=\ell_i, c=w_j$. Then $\left(\varphi(ab), \varphi(ac), \varphi(bc)\right)=(\ell_i,m_{i,j},r_j)\in \ca$, 

\item $c=u,a=\ell_i, c=\ell_j$. Then $\left(\varphi(ab), \varphi(ac), \varphi(bc)\right)=(\ell_{i,j},\ell_i,\ell_j)\in \ca$.
\end{itemize}
\end{proof}

We then introduce a variation of the Caro-Wei theorem in digraph. It is similar to Theorem 4 in \cite{gruber2011bounding}.

\begin{lemma}\label{lem:inverse}
Let $H$ be a digraph on $n$ vertices, which does not contain a transitive tournament on $k$ vertices as a subgraph. For each vertex $v$, let $m(v)=\max\{d^+(v), d^-(v)\}$. Then \[\sum_{v\in V(H)}\frac{1}{n-m(v)}\leq k-1.\]
\end{lemma}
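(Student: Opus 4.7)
}
The plan is to run a directed analogue of the standard probabilistic proof of the Caro--Wei inequality. For each vertex $v$ I will declare $v$ to be \emph{out-type} if $m(v)=d^+(v)$ and \emph{in-type} otherwise (breaking ties arbitrarily), and set
\[T(v) := \begin{cases} V(H)\setminus N^+(v) & \text{if $v$ is out-type,} \\ V(H)\setminus N^-(v) & \text{if $v$ is in-type.} \end{cases}\]
In either case $v\in T(v)$ and $|T(v)|=n-m(v)$. I will then draw a uniformly random linear order $<$ on $V(H)$ and let $S$ be the random set of vertices $v$ for which $v$ is the $<$-minimum of $T(v)$. By linearity of expectation,
\[\mathbb{E}[|S|]=\sum_{v\in V(H)}\frac{1}{n-m(v)}.\]

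The heart of the argument will be to show that, for every outcome of $<$, the set $S$ spans a transitive tournament in $H$. Unfolding the definition: if $v\in S$, then every $u<v$ must lie outside $T(v)$, which forces $v\to u$ in $H$ when $v$ is out-type and $u\to v$ in $H$ when $v$ is in-type. In particular, any two members of $S$ are joined by at least one arc in $H$, so $H[S]$ contains a tournament. For transitivity I will take any triple $u<v<w$ in $S$: the type of $w$ fixes whether $w$ is a common source or a common sink of the triangle on $\{u,v,w\}$, while the type of $v$ fixes the orientation between $u$ and $v$. A short case analysis over the four possible combinations of types for $(v,w)$ verifies that the resulting three arcs always form a transitive triangle (whose topological order may differ from $<$, but this is irrelevant).

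Once transitivity is established, the hypothesis that $H$ contains no transitive tournament on $k$ vertices yields $|S|\le k-1$ deterministically, and taking expectations completes the proof. The only substantive obstacle is the type-dependent case analysis verifying transitivity of $H[S]$; the probabilistic wrapper around it is entirely routine.
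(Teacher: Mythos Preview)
Your argument is correct, and it takes a genuinely different route from the paper. The paper proceeds by induction on $k$: pick a vertex $w$ maximising $m(w)$, say with $m(w)=d^+(w)$, restrict to its out-neighbourhood $S$ (which cannot contain a transitive $(k-1)$-tournament), and bound the sum by splitting it over $S$ and $V(H)\setminus S$; the key estimate is $m(v)\le m_S(v)+(n-|S|)$ for $v\in S$, which lets the inductive hypothesis kick in. Your approach instead runs the probabilistic Caro--Wei argument with the type-dependent sets $T(v)$, and the substantive content is the observation that in the canonical tournament on $S$ the $<$-largest vertex $w$ of any triple is always either a common source or a common sink, which forces every triangle to be transitive regardless of the $u$--$v$ orientation; since a tournament with no directed $3$-cycle is transitive, $H[S]$ contains a transitive tournament on $|S|$ vertices. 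Your proof is arguably slicker and avoids the bookkeeping of the inductive step, while the paper's argument is more elementary and self-contained. One small remark: you should state explicitly that ``every triangle transitive $\Rightarrow$ tournament transitive'' is being used, since the proposal only checks triples.
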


\begin{proof}

We proceed by induction on $k$. The statement is clear for $k=2$, since $H$ is the empty graph. Suppose that the statement holds for $k-1$. Let $w$ be a vertex with the maximum value of $m(w)$, and by symmetry assume that $m(w)=d^+(w)$. Let $S$ be the set of out-neighbors of $w$, and $D[S]$ be the digraph induced by $D$ in $S$. Then $D[S]$ does not contain a transitive tournament on $k-1$ vertices, otherwise we could extend it by adding $w$. Let $d^+_S(v)$ and $d^-_S(v)$ denote the out-degree and in-degree of $v$ in $G_S$, and let $m_S(v)=\max\{d^+_S(v), d^-_S(v)\}$. Hence,

\begin{align*}
\sum_{v\in v(H)}\frac{1}{n-m(v)}&=\sum_{v\in S}\frac{1}{n-m(v)}+\sum_{v\in V(H)\setminus S}\frac{1}{n-m(v)}\\
&\le \sum_{v\in S}\frac{1}{n-m(v)}+\sum_{v\in V(H)\setminus S}\frac{1}{n-m(w)} && (\text{by the maximality of }m(w))\\
&\le \sum_{v\in S}\frac{1}{n-m(v)}+1\\
&\le \sum_{v\in S}\frac{1}{|S|-m_S(v)}+1  &&(m(v)\leq m_S(v)+(n-|S|))\\
&\le k-1. && (\text{by the induction hypothesis})
\end{align*}

\end{proof}

We then combine \cref {cla:trans} and \cref{lem:inverse} to give an estimation of $e_{i,j}$. 

\vskip 0.1 em

For each $a\in\cc$, set 
\begin{align*}
m_A(a)&=\max\{e_{2,3}(a), e_{3,2}(a)\},\quad \quad m_C(a)=\max\{e_{1,2}(a), e_{2,1}(a)\}, \\
m_B(a)&=m_A(a)+e_{1,3}(a),\quad \quad \quad \quad \, m_D(a)=m_C(a)+e_{3,1}(a).
\end{align*}

For each $a\in \ca$, set $M_A(a)=\frac{1}{1-m_A(a)}$, $M_C(a)=\frac{1}{1-m_C(a)}$. Consider the digraph $D[\cc_1]$. For each $a\in\cc$, the maximum of its in-degree and out-degree in $D[\cc_1]$ is $m_A(a)n$, so by Lemma~\ref{lem:inverse}, we have 
\begin{align}\label{1}
\sum_{a\in\cc}M_A(a)=\sum_{a\in\cc}\frac{1}{1-m_A(a)}\leq (k-1)n,
\end{align}
and the inequality is also true when we replace $M_A(a)$ with $M_C(a)$. 

\vskip 0.15 em

Recall that $m(v)=\max\{d^+(v), d^-(v)\}$. In $D$, for each $a\in\cc$, $m(a^1)=m_B(a)n$, $m(a^2)=m_D(a)n$. Set $M_B(a)=\frac{1}{2-m_B(a)}, M_D(a)=\frac{1}{2-m_D(a)}$, then by Lemma~\ref{lem:inverse} again we have  
\begin{align}\label{2}
\sum_{a\in \cc}\big(M_B(a)+M_D(a)\big)=\sum_{a\in \cc}\left(\frac{1}{2-m_B(a)}+\frac{1}{2-m_D(a)}\right)\le  (k-1)n.
\end{align}

Now, by Lemma~\ref{lem:incexc}, we have

\begin{align}\label{3}
d(\cp)
&\le \frac{1}{4}+\frac{1}{2n}\sum_{a\in\cc}\sum_{i\neq j\in\{1,2,3\}}\left(e_{i,j}(a)-\frac {1}{2}\right)^2 \nonumber\\
&\le \frac14+\frac{1}{2n}\sum\limits_{a\in\cc}\left(2\left(m_A(a)-\frac 12\right)^2+\left(m_B(a)-m_A(a)-\frac12\right)^2\right)\nonumber\\
&+\frac{1}{2n}\sum\limits_{a\in\cc}\left(2\left(m_C(a)-\frac 12\right)^2+\left(m_D(a)-m_C(a)-\frac12\right)^2\right). 
\end{align}

Note that for all real numbers $a,b$ we have the local inequality
\begin{align}\label{4}
2(a-\frac{1}{2})^2+(b-a-\frac{1}{2})^2
&=(a-\frac{1}{2})^2+\frac{1}{2}(b-1)^2+2(a-\frac{b}{2})^2\nonumber\\
&\le (a-\frac{1}{2})^2+\frac{1}{2}(b-1)^2+4(\frac{k-2}{k-1}-a)^2+4(\frac{k-2}{k-1}-\frac{b}{2})^2\nonumber\\
&=f(a)+g(b),
\end{align}
where $f(x)=(x-\frac{1}{2})^2+4(\frac{k-2}{k-1}-x)^2$, $g(x)=\frac{1}{2}(x-1)^2+4(\frac{k-2}{k-1}-\frac{x}{2})^2$.

\vskip 0.4 em

Substituting \eqref{4} in \eqref{3}, we have
\begin{align}\label{5}
d(\cp)
&\le \frac{1}{4}+\frac{1}{2n}\sum_{a\in\cc}\big( f(m_A(a))+f(m_C(a))+g(m_B(a))+g(m_D(a)) \big)\nonumber\\
&= \frac{1}{4}+\frac{1}{2n}\sum_{a\in\cc}\big( f_1(M_A(a))+f_1(M_C(a))+g_1(M_B(a))+g_1(M_D(a)) \big),
\end{align}
where  $f_1(x)=f(1-\frac{1}{x})$, $g_1(x)=g(2-\frac{1}{x})$. We can calculate that
\[f_1(x)=\frac{5}{x^2}-\frac{(k+7)}{(k-1)x}+\frac{4}{(k-1)^2}+\frac{1}{4}, \quad  g_1(x)=\frac{3}{2x^2}-\frac{(k+3)}{(k-1)x}+\frac{4}{(k-1)^2}+\frac{1}{2}.\]

Now we bound the four sums in \eqref{5} separately.  The idea is to use a tangent-type local inequality.

\begin{claim}\label{cla:L1}
For any $x\ge \frac{5(k-1)}{k-3}$,
\[f_1(x)\le \frac{(k-3)}{(k-1)^3}x+\frac{k^2-10k+21}{(2k-2)^2}.\]
\end{claim}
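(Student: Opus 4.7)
The plan is to reduce the inequality to a polynomial inequality and exploit the tangent structure suggested by the phrasing ``tangent-type local inequality''. After multiplying through by $x^2>0$ and simplifying the $x^2$-coefficient using
\[\frac{(k-3)(k-7)}{4(k-1)^2}-\frac{4}{(k-1)^2}-\frac{1}{4}=\frac{(k-3)(k-7)-16-(k-1)^2}{4(k-1)^2}=-\frac{2k-1}{(k-1)^2},\]
the claim is equivalent to
\[p(x):=\frac{k-3}{(k-1)^3}x^3-\frac{2k-1}{(k-1)^2}x^2+\frac{k+7}{k-1}x-5\ \ge\ 0\quad\text{for all }x\ge \frac{5(k-1)}{k-3}.\]

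The next step is to identify the tangent point. Differentiating gives $f_1'(x)=-\frac{10}{x^3}+\frac{k+7}{(k-1)x^2}$, and setting this equal to the target slope $\frac{k-3}{(k-1)^3}$, the value $x_0=k-1$ is an immediate solution since $-\frac{10}{(k-1)^3}+\frac{k+7}{(k-1)^3}=\frac{k-3}{(k-1)^3}$. A direct check confirms $f_1(k-1)=\frac{(k-3)^2}{4(k-1)^2}$, which agrees with the right-hand side of the claim evaluated at $x=k-1$. Hence the line is tangent to $f_1$ at $x=k-1$, and equivalently $p$ has a double root at $k-1$.

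Therefore $p$ factors as $p(x)=\frac{k-3}{(k-1)^3}(x-(k-1))^2(x-r)$ for some third root $r$, and matching the $x^2$-coefficient (or using Vieta) yields
\[r=\frac{(2k-1)(k-1)}{k-3}-2(k-1)=\frac{5(k-1)}{k-3},\]
which is exactly the threshold appearing in the hypothesis. For $k\ge 48$ the leading coefficient $\frac{k-3}{(k-1)^3}$ is positive, and for $x\ge \frac{5(k-1)}{k-3}$ each of the three factors is nonnegative, giving $p(x)\ge 0$.

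The argument is essentially algebraic, so the only real obstacle is bookkeeping: simplifying the $x^2$-coefficient to the clean form $-\frac{2k-1}{(k-1)^2}$, and recognizing that the threshold $\frac{5(k-1)}{k-3}$ in the statement is engineered to be precisely the third root of $p$, which is what makes the tangent bound tight at both endpoints of saturation.
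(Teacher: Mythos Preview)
Your proof is correct and reaches exactly the same factorization as the paper, which simply asserts in one line that the inequality is equivalent to $(x-(k-1))^2\bigl((k-3)x-5(k-1)\bigr)\ge 0$. Your version supplies the derivation the paper omits (clearing $x^2$, locating the tangent point $x=k-1$ as a double root, and recovering the third root via Vieta), so the two arguments are the same in substance; one cosmetic point is that you invoke $k\ge 48$ for the positivity of the leading coefficient, whereas only $k>3$ is actually needed there.
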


\begin{proof}
The inequality is equivalent to $(x-(k-1))^2((k-3)x-5(k-1))\ge 0$.
\end{proof}

By Claim~\ref{claim:degree}, we have $m_A(a)\ge 3d(\cp)-2$ for every $a\in \ca$, therefore 
\[M_A(a)\ge \frac{1}{3-3d(\cp)}\ge \frac{(k-1)^2}{9(k-2)}\ge \frac{5(k-1)}{k-3}\]

when $k\ge 48$. Hence, by Claim~\ref{cla:L1} and \eqref{1}, we have
\begin{align}\label{6}
\sum_{a\in\cc}f_1(M_A(a))
&\le \sum_{a\in\cc}\left(\frac{(k-3)}{(k-1)^3}M_A(a)+\frac{k^2-10k+21}{(2k-2)^2}\right)\nonumber\\
&\le \frac{(k-3)}{(k-1)^3}(k-1)n+\frac{k^2-10k+21}{(2k-2)^2}n\nonumber\\
&=n\frac{(k-3)^2}{(2k-2)^2}.
\end{align}

Similarly, we have
\begin{align}\label{7}
\sum_{a\in\cc}f(m_C(a)) \le n\frac{(k-3)^2}{(2k-2)^2}. 
\end{align}

\begin{claim}\label{cla:L2}
For any $x\ge \frac{3(k-1)}{2k-6}$,
\[g_1(x)\le \frac{4k-12}{(k-1)^3}x+\frac{k^2-10k+21}{2(k-1)^2}. \]
\end{claim}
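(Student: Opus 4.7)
The plan is to mirror the proof of Claim~\ref{cla:L1}: reduce the inequality to a cubic polynomial inequality in $x$ and exhibit an explicit factorization that makes nonnegativity for $x \ge \frac{3(k-1)}{2(k-3)}$ manifest. Since $x > 0$, multiplying both sides of the desired inequality by the positive quantity $2(k-1)^3 x^2$ and using the formula $g_1(x) = \frac{3}{2x^2} - \frac{k+3}{(k-1)x} + \frac{4}{(k-1)^2} + \frac{1}{2}$ reduces the claim to
\[8(k-3)x^3 - 4(k-1)(2k-3)x^2 + 2(k+3)(k-1)^2 x - 3(k-1)^3 \ge 0.\]

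The key observation is that this cubic factors as
\[\bigl(2x - (k-1)\bigr)^2 \bigl(2(k-3)x - 3(k-1)\bigr),\]
which is verified by a direct expansion. The first factor is a square, and the second factor is nonnegative precisely when $x \ge \frac{3(k-1)}{2(k-3)}$, which is exactly the hypothesis of the claim; this yields the desired inequality.

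Conceptually, the double root $x^{\ast} = (k-1)/2$ appears because the right-hand side $\frac{4(k-3)}{(k-1)^3}x + \frac{k^2-10k+21}{2(k-1)^2}$ is the tangent line to the graph of $g_1$ at $x^{\ast}$; one can discover $x^{\ast}$ by solving $g_1'(x^{\ast}) = \frac{4(k-3)}{(k-1)^3}$, whose cubic in $1/x^{\ast}$ has $2/(k-1)$ as an obvious root. The only mildly delicate step is spotting the tangency point in advance, since it does \emph{not} coincide with the endpoint $\frac{3(k-1)}{2(k-3)}$ of the range; once it is identified, the remaining algebra is entirely mechanical and parallels Claim~\ref{cla:L1} step for step.
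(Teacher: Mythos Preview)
Your proof is correct and takes essentially the same approach as the paper: the paper's proof is the single line ``The inequality is equivalent to $(2x-(k-1))^2((2k-6)x-3(k-1))\ge 0$,'' which is exactly your factorization with $2(k-3)=2k-6$. Your version simply makes the intermediate cubic and the tangent-line intuition explicit.
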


\begin{proof}
The inequality is equivalent to $(2x-(k-1))^2((2k-6)x-3(k-1))\ge 0$.
\end{proof}

By Claim~\ref{claim:degree}, we have $m_B(a)\ge 6d(\cp)-4$ for every $a\in \ca$, then 

\[M_B(a)\ge \frac{1}{6-6d(\cp)}\ge \frac{(k-1)^2}{18(k-2)}\ge \frac{3(k-1)}{2k-6}\]
when $k\ge 30$. Similarly, $M_D(a)\ge \frac{3(k-1)}{2k-6}$, hence by Claim~\ref{cla:L2} and  \eqref{2} we have
\begin{align}\label{8}
\sum_{a\in\cc}\big(g_1(M_B(a))+g_1(M_D(a)\big)&\le \sum_{a\in\cc}\left(\frac{4k-12}{(k-1)^3}(M_B(a)+M_D(a))+\frac{k^2-10k+21}{(k-1)^2}\right)\nonumber\\
&\le \frac{4k-12}{(k-1)^3}(k-1)n+\frac{k^2-10k+21}{(k-1)^2}n\nonumber\\
&=n\frac{(k-3)^2}{(k-1)^2}. \tag{$8$}
\end{align}

Hence, putting together \eqref{5}, \eqref{6}, \eqref{7} and \eqref{8}, we have
\begin{align*}
d(\cp)&\le \frac{1}{4}+\frac{1}{2n}\sum_{a\in\cc}\big( f(m_A(a))+f(m_C(a))+g(m_B(a))+g(m_D(a)) \big)\\
&\le \frac{1}{4}+\frac{3(k-3)^2}{4(k-1)^2}\\
&=\frac{k^2-5k+7}{(k-1)^2}.
\end{align*}

which finishes the proof.
\end{section}

\begin{section}{Concluding remarks}

Using computer calculations, one can show that the inequalities \eqref{6}, \eqref{7} hold for $k\ge 40$, which means that \cref{thm:main} is also true for $k\ge 40$. 

\end{section}

\paragraph{Acknowledgements}

The authors want to thank Haoran Luo for computer calculation. The authors are also grateful to  Daniel Kr\'al, Hong Liu and Oleg Pikhurko for careful reading and some writing suggestions.

\bibliographystyle{abbrv}
\bibliography{bibliog.bib}
\end{document}